\newtheorem{theorem}{Theorem}[section]
\newtheorem{lemma}[theorem]{Lemma}
\newtheorem{note}[theorem]{Note}
\newtheorem{prop}[theorem]{Proposition}
\newtheorem*{Theorem1'}{Theorem 1'}
\theoremstyle{definition}
\theoremstyle{remark}
\numberwithin{equation}{section}
\newcommand \g{{\mathfrak g}}
\renewcommand \a{{\mathfrak a}}
\newcommand \w{{\mathfrak b}}
\newcommand \e{{\mathfrak e}}
\newcommand \ad{{\mathrm {ad}}}
\newcommand \gl{{\mathfrak {gl}}}
\renewcommand \sl{{\mathfrak {sl}}}
\newcommand \B{{\mathcal B}}
\newcommand \GL{{\mathrm {GL}}}
\newcommand \T{{\mathcal{S}}}
\newcommand \al{{\alpha}}
\newcommand \Q{{\mathbb Q}}
\newcommand \chr{{\mathrm {char}}}
\begin{document}

\title[Indecomposable modules of solvable Lie algebras ]
{Indecomposable modules of 2-step solvable Lie algebras in arbitrary characteristic}

\author{Leandro Cagliero}
\address{CIEM-CONICET, FAMAF-Universidad Nacional de C\'ordoba, C\'ordoba, Argentina.}
\email{cagliero@famaf.unc.edu.ar}
\thanks{The first author was supported in part by CONICET and SECYT-UNC grants.}

\author{Fernando Szechtman}
\address{Department of Mathematics and Statistics, University of Regina, Canada}
\email{fernando.szechtman@gmail.com}
\thanks{The second author was supported in part by an NSERC discovery grant}

\subjclass[2000]{Primary 17B10}



\keywords{Indecomposable module; Uniserial module; Lie algebra}

\begin{abstract} Let $F$ be an algebraically closed field and
consider the Lie algebra $\g=\langle x\rangle\ltimes \a$, where
$ad\, x$ acts diagonalizably on the abelian Lie algebra $\a$.
Refer to a $\g$-module as admissible if $[\g,\g]$ acts via
nilpotent operators on it, which is automatic if $\chr(F)=0$. In
this paper we classify all indecomposable $\g$-modules $U$ which are admissible as well as uniserial,
in the sense that $U$ has a unique composition series.

\end{abstract}

\maketitle

\section{Introduction}

We fix throughout an arbitrary field $F$. All vector spaces,
including all Lie algebras and their modules, are assumed to be finite dimensional over $F$.

If we consider the problem of classifying all indecomposable modules of a Lie algebra $\g$,
there are two well-known and opposite instances where a solution is available.

In the first case $\g=\langle x\rangle$ is 1-dimensional.
Then a $\g$-module $U$ is indecomposable if and only if $x$ acts on $U$ via the
companion matrix of the power of an irreducible polynomial.
In particular, when $F$ is algebraically closed, this
means that $x$ acts on $U$ through a Jordan block.

In the second case $F$ is algebraically closed of characteristic 0
and $\g$ is semisimple. Then the indecomposable $\g$-modules
are the irreducible $\g$-modules, which can be classified by their highest weights
(see \cite{H}).

In general, the problem seems to be untractable, as discussed in \cite{GP}
for abelian Lie algebras of dimension $\geq 2$, in \cite{S} for the 3-dimensional
complex euclidean Lie algebra $\e(2)$, and in \cite{M} for virtually any complex
Lie algebra other than semisimple or 1-dimensional.

In spite of the difficulties in achieving a classification,
various families of indecomposable modules over various types of non-semisimple
complex Lie algebras have been constructed and classified in recent years. See, for instance,
\cite{CMS}, \cite{CM}, \cite{DG}, \cite{DP}, \cite{DKR} and \cite{J}.

One may hope to contribute to the classification problem by restricting attention to certain types
of indecomposable modules, defined by suitable structural properties. In a sense, the simplest type of indecomposable module,
other than irreducible, is a uniserial module, i.e., one possessing a unique composition series.

In trying to classify all uniserial modules of an abelian Lie
algebra $\g$, the first observation is that when $\dim(\g)=1$ then
uniserial means indecomposable, a case considered above. If
$\dim(\g)\geq 2$ we immediately run into technical problems, due
to the existence of field extensions without a primitive element
(see \cite{Mc}, Ch. 1). However, it is shown in \cite{CS2} that if
$F$ is a sufficiently large perfect field and $U$ is a uniserial
$\g$-module then there is $x\in\g$ such that $U$ is a uniserial
$F[x]$-module. Thus, relative to a suitable basis, the action of
$x$ on $U$ can be represented by the companion matrix of the power
of an irreducible polynomial and every other element of $\g$ can
be represented as a polynomial in this matrix. The proof of this
result is quite technical, relying on subtle results from Galois
theory and the existence and uniqueness of the Jordan-Chevalley
decomposition over perfect fields. We begin this paper by giving
an elementary proof, under the stronger assumption that $F$ is
algebraically closed, in which case we obtain the sharper
conclusion that, if $\B$ is any basis of $\g$, then $x$ can be
taken from $\B$, which is certainly false for more general perfect
fields (look at $\Q[\sqrt{2},\sqrt{3}]$ as a module over the
2-dimensional Lie algebra $\g=\langle \sqrt{2},\sqrt{3}\rangle$).

Let us consider next uniserial modules over a non-abelian Lie
algebra $\g$. For $\g=\sl(2)\ltimes V(m)$, where $V(m)$ is the
irreducible $\sl(2)$-module of highest weight $m\geq 1$, a
complete solution was achieved in \cite{CS}. The classification of
all uniserial $\sl(2)\ltimes V(m)$-modules is intimately related
to the problem of determining all non-trivial zeros of the
Racah-Wigner $6j$-symbol within certain parameters, which is, in
general, a fairly difficult problem (see \cite{R} and \cite{L}).
We remark that in the special case $\g=\sl(2)\ltimes V(1)$, the
classification of uniserial and far more general indecomposable
modules had been previously obtained in \cite{P}.

Suppose next $\g=\langle x\rangle\ltimes\a$, where $ad\, x$ acts
diagonalizably on the abelian Lie algebra~$\a$. In this case, a
full classification of uniserial and far more general
indecomposable $\g$-modules was obtained in \cite{CS3} under the
only assumption that $F$ have characteristic~0. This
classification fails completely in prime characteristic, the main
reason being the failure of Lie's theorem and, as a result, the
failure of $[\g,\g]$ to act trivially on an irreducible
$\g$-module. Let us refer to a $\g$-module $U$ as admissible if
$[\g,\g]$ acts on $U$ via nilpotent operators, which is automatic
if $\chr(F)=0$. In this paper we classify all admissible uniserial
$\g$-modules of composition length $m\geq 1$ when $F$ is
algebraically closed, whatever the characteristic of $F$. If
$\chr(F)=0$ or $\chr(F)=p\geq m$ our results match those from
\cite{CS3}, although, taking advantage of the fact that $F$ is
algebraically closed, are proofs are drastically simpler. When $F$
has prime characteristic $p<m$ each of the isomorphism of classes
that arise when $p\geq m$ now split into infinitely distinct
classes, according to the orbits of an intransitive group action,
while, on the other hand, infinitely many new isomorphism classes
arise which only exist when $p<m$.

\section{Uniserial families of commuting operators}
\label{s2}


It will be convenient to use a slightly simpler language than above.
A family $\T$ of endomorphisms of a non-zero vector space $V$ is
said to be uniserial if the $\T$-invariant subspaces of
$V$ form a chain under inclusion.

\begin{lemma}\label{superd}
Let $\T$ be a uniserial family of operators acting on a non-zero
vector space $V$. Suppose that there is a basis of $V$ relative to
which every element of $\T$ can be represented by an upper
triangular matrix. Then none of the entries along the first
superdiagonal of these matrices is identically 0.
\end{lemma}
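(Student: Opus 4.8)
The plan is to exploit the standard flag attached to the triangularizing basis. Fix a basis $v_1,\dots,v_n$ of $V$ relative to which every $T\in\T$ is upper triangular, and set $V_k=\langle v_1,\dots,v_k\rangle$ for $0\le k\le n$, with $V_0=0$. Upper-triangularity means $Tv_j=\sum_{k\le j}T_{kj}v_k\in V_j$ for every index $j$ and every $T\in\T$, so each $V_k$ is a $\T$-invariant subspace. These subspaces already form a maximal chain, and the point of the lemma is that they are essentially the \emph{only} way to realize the chain of invariant subspaces; a vanishing superdiagonal entry would produce a competitor.

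Arguing by contradiction, I would suppose that for some fixed $i$ with $1\le i\le n-1$ the $(i,i+1)$-entry of \emph{every} matrix in $\T$ is zero. I would then consider the subspace $W=\langle v_1,\dots,v_{i-1},v_{i+1}\rangle$ obtained from $V_i$ by deleting $v_i$ and adjoining $v_{i+1}$, and verify that $W$ is $\T$-invariant. For $j\le i-1$ this is immediate, since $Tv_j\in V_j\subseteq V_{i-1}\subseteq W$. The only genuine computation is for $v_{i+1}$: expanding $Tv_{i+1}=\sum_{k\le i+1}T_{k,i+1}v_k$ and invoking the assumption $T_{i,i+1}=0$ shows that the coefficient of $v_i$ vanishes, so that $Tv_{i+1}\in V_{i-1}+\langle v_{i+1}\rangle=W$.

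Finally I would reach the contradiction with uniseriality by checking that $W$ and $V_i$ are incomparable. Both have dimension $i$, yet $v_{i+1}\in W\setminus V_i$ and $v_i\in V_i\setminus W$, so neither contains the other; this violates the hypothesis that the $\T$-invariant subspaces form a chain under inclusion. I do not anticipate a genuine obstacle in this argument; constructing the correct competitor subspace $W$ is the only idea required, and the sole point needing care is the boundary case $i=1$, where $V_{i-1}=V_0=0$ and $W=\langle v_{i+1}\rangle$, which the same computation handles.
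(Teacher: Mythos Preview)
Your argument is correct. The construction of $W=\langle v_1,\dots,v_{i-1},v_{i+1}\rangle$ works exactly as you describe, and the incomparability of $W$ and $V_i$ yields the contradiction; the boundary case $i=1$ causes no trouble.

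Your route differs from the paper's. The paper does not build a competitor subspace inside $V$; instead it observes that uniseriality is inherited by $\T$-invariant subspaces and by quotients, and then passes to the two-dimensional subquotient $V_{i+1}/V_{i-1}$, on which $\T$ acts by diagonal $2\times 2$ matrices once the $(i,i+1)$ entry vanishes identically---an evident violation of uniseriality. Your argument is more self-contained: it stays in $V$ and avoids invoking the (easy but unstated) fact that uniseriality passes to subquotients. The paper's argument, on the other hand, isolates a reusable principle (reduce to a $2\times 2$ subquotient) that it tacitly relies on elsewhere. The two proofs are close cousins: your $W$ is precisely the preimage in $V_{i+1}$ of one of the two coordinate lines in the paper's $2\times 2$ picture.
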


\begin{proof} Given any non-zero (resp. proper) $\T$-invariant subspace $W$ of $V$ we see
that $\T$ induces a uniserial family of linear operators on $W$
(resp. $V/W$). Thus, if the result is false, we obtain a uniserial
family of $2\times 2$ diagonal matrices, which is absurd.
\end{proof}

\begin{lemma}\label{upper} Suppose $A\in M_m(F)$ is upper triangular.
Then there is $P\in\GL_m(F)$ such that $P$ is upper triangular with diagonal entries equal to 1
and $B=P^{-1}AP$ satisfies: if $B_{ii}\neq B_{jj}$ then
$B_{ij}=0$.
\end{lemma}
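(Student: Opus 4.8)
The plan is to clear the ``bad'' off-diagonal entries one at a time by conjugating with elementary unipotent matrices $I + tE_{ij}$, $i<j$, where $E_{ij}$ is the matrix unit, and to arrange these conjugations in an order that guarantees no previously eliminated entry is revived.

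The computational heart is a single identity. Since $E_{ij}^2 = 0$ for $i \neq j$, the inverse of $P = I + tE_{ij}$ is $I - tE_{ij}$, and
\[
P^{-1}AP = A + t(AE_{ij} - E_{ij}A) - t^2 E_{ij}AE_{ij}.
\]
I would record that $AE_{ij}$ is supported on column $j$ (it places column $i$ of $A$ into column $j$), that $E_{ij}A$ is supported on row $i$, and that $E_{ij}AE_{ij} = A_{ji}E_{ij}$, which vanishes when $A$ is upper triangular and $i<j$. Reading off entries then gives, for upper triangular $A$,
\[
B_{ij} = A_{ij} + t(A_{ii} - A_{jj}),
\]
while the only other entries that change occupy positions $(k,j)$ with $k<i$ and $(i,l)$ with $l>j$. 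In particular every diagonal entry is preserved, so the partition of the indices by diagonal value is an invariant of the whole process.

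From here the key structural observation is that each disturbed position lies on a strictly higher superdiagonal than $(i,j)$, since $j-k > j-i$ and $l-i > j-i$. Accordingly I would induct on the superdiagonal index $d = 1,\dots,m-1$: assuming all bad entries (those with $A_{ii}\neq A_{jj}$) on superdiagonals $1,\dots,d-1$ have been cleared, I clear each bad entry $(i,j)$ with $j-i=d$ by taking $t = -A_{ij}/(A_{ii}-A_{jj})$ in the corresponding elementary conjugation, which is legitimate precisely because $A_{ii}\neq A_{jj}$. Since such a conjugation alters only entries on superdiagonals $>d$, it neither disturbs the entries cleared at earlier stages nor interferes with the other entries on superdiagonal $d$, so these eliminations are independent and may be composed. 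The matrix stays upper triangular throughout (so the quadratic term keeps vanishing), and the accumulated $P$, being a product of unipotent upper triangular matrices in $\GL_m(F)$, is again upper triangular with unit diagonal.

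The main obstacle is exactly the bookkeeping in this last step: one must confirm that the elimination order is consistent, i.e.\ that clearing a bad entry never reintroduces a nonzero value into a position already forced to zero. The superdiagonal estimate above is what makes this work, and verifying it — together with the vanishing of the quadratic term, which rests on upper triangularity being maintained — is the only delicate point; the remainder is routine.
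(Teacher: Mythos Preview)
Your proof is correct and follows essentially the same strategy as the paper: clear the offending entries one at a time via conjugation by elementary unipotent matrices $I+tE_{ij}$, in an order that prevents previously cleared entries from being revived. The only difference is the traversal order---you sweep outward superdiagonal by superdiagonal, whereas the paper sweeps upward row by row (left to right within each row)---but both orderings succeed for exactly the reason you isolate, namely that such a conjugation disturbs only positions $(k,j)$ with $k<i$ and $(i,l)$ with $l>j$.
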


\begin{proof} We will clear all required blocks of $A$ by means of transformations
$$T\mapsto
(I-\alpha E^{ij})T(I+\alpha E^{ij}),
$$
where $T\in M_m(F)$, $i<j$ and $\alpha\in F$, proceeding upwards one row at a time and, within each row, from
left to right one column at a time.

Suppose that $C$ was obtained from $A$ by means of a sequence of these
transformations and satisfies the following: for some $1\leq i<n$,
all required entries of $C$ are 0 below row~$i$, and there is
$i<j\leq n$ such that if $i<k<j$ and $C_{ii}\neq C_{kk}$
then $C_{ik}=0$. Set
$$
D=(I-\al E^{ij})C(I+\al E^{ij}),
$$
where $\al=0$ if $C_{ii}=C_{jj}$ and $\al=C_{ij}/(C_{jj}-C_{ii})$ otherwise.
Then the entries of $D$ and $C$ coincide below row
$i$ as well as within row $i$ but to the left of position
$(i,j)$. Moreover, $D_{ij}=0$ if
$D_{ii}\neq D_{jj}$. We may thus continue this process
and find $B$ as required.
\end{proof}

\begin{theorem}\label{comm} Suppose $F$ is algebraically closed field and let $\T=\{x_1,\dots,x_n\}$ be a uniserial
family of commuting endomorphisms of a non-zero vector space $V$. Then there is an index $1\leq i\leq n$
and a basis of $V$ relative to which $x_i$ is represented by an upper triangular Jordan block $J_m(\alpha)$, $\alpha\in F$, and all other $x_j$
are represented by a polynomial in $J_m(\alpha)$.
\end{theorem}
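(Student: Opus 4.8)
The plan is to translate the module-theoretic hypothesis into structural facts about the commutative algebra $A=F[x_1,\dots,x_n]\subseteq\End(V)$ and to identify $A$ with $F[t]/(t^m)$, where $m=\dim V$. First I would build a complete flag of $\T$-invariant subspaces. Since $\T$ is uniserial its invariant subspaces form a chain $0=W_0\subset W_1\subset\cdots\subset W_r=V$, and each factor $W_k/W_{k-1}$ is an irreducible module for the commuting family $\T$; because $F$ is algebraically closed, commuting operators share an eigenvector, so each factor is $1$-dimensional. Hence $r=m=\dim V$, the chain is a complete flag, and in a basis $e_1,\dots,e_m$ adapted to it every $x_j$ is upper triangular. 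I would then show each $x_j$ has constant diagonal: if two consecutive diagonal entries of some $x_j$ differed, then on the $2$-dimensional quotient $W_{k+1}/W_{k-1}$ the operator $x_j$ would have two distinct eigenvalues, hence a second eigenline, which is $\T$-invariant since the other operators commute with $x_j$, contradicting the uniqueness of the invariant line $W_k/W_{k-1}$ there. Writing $\alpha_j$ for the common diagonal value of $x_j$ and $y_j=x_j-\alpha_j I$, the $y_j$ are commuting strictly upper triangular, hence nilpotent, operators with the same invariant subspaces, so $\{y_1,\dots,y_n\}$ is again uniserial and $A=F[y_1,\dots,y_n]$.

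Next I would analyze $A$. It is local with maximal ideal $\mathfrak m$ equal to its set of nilpotent elements, and $A/\mathfrak m=F$. Choosing $v\notin W_{m-1}$, the submodule $Av$ lies in no $W_k$ with $k<m$, so $Av=V$; since $A$ acts faithfully, the identity $a(bv)=b(av)$ shows that $av=0$ forces $aV=0$ and hence $a=0$, so $\operatorname{Ann}(v)=0$ and $V\cong A$, giving $\dim A=m$. The invariant subspaces now correspond to the ideals of $A$, which therefore form a chain $0=I_0\subset\cdots\subset I_m=A$ with $\dim I_k=k$. Picking $z\in I_{m-1}\setminus I_{m-2}$, the ideal $(z)$ is comparable to every $I_k$ but is not contained in $I_{m-2}$, while $z\in\mathfrak m=I_{m-1}$ gives $(z)\subseteq\mathfrak m$; hence $(z)=\mathfrak m$, so $\mathfrak m$ is principal and $A=F[z]\cong F[t]/(t^m)$ with $z^{m-1}\neq 0=z^m$. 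Thus $z$ is a single nilpotent Jordan block on $V$ and $1,z,\dots,z^{m-1}$ is a basis of $A$.

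Finally I would locate the required generator. Passing to a Jordan basis for $z$ (which still refines the flag, since $\ker z^k=W_k$), I may assume $z=N$ is the standard nilpotent block, so each $y_j=c_{j1}N+c_{j2}N^2+\cdots$ and the first superdiagonal of $y_j$ is the constant $c_{j1}$. By Lemma~\ref{superd} not every superdiagonal position is identically $0$, so $c_{i1}\neq 0$ for some $i$; that $y_i$ then has nilpotency index $m$, hence is itself a single Jordan block, and $F[y_i]=A$. Choosing a Jordan basis for $y_i$ turns $x_i=\alpha_i I+y_i$ into $J_m(\alpha_i)$ while preserving $F[x_i]=F[y_i]=A$, so every $x_j$ is a polynomial in $J_m(\alpha_i)$; taking $\alpha=\alpha_i$ completes the proof.

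The step I expect to be the main obstacle is the middle one: extracting from module-uniseriality the fact that $A$ is a chain ring with \emph{principal} maximal ideal, i.e. $A\cong F[t]/(t^m)$, as this is exactly what forces a single generating Jordan block. The subsequent role of Lemma~\ref{superd} is the comparatively delicate point that this generator can be chosen among the given $x_j$, rather than merely as some element of $A$.
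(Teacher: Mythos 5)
Your proof is correct, and while its endpoints coincide with the paper's, its core takes a genuinely different route. Both arguments begin by simultaneously triangularizing (you do this via the complete flag of invariant subspaces), both show that each $x_j$ has a single eigenvalue, and both end by invoking Lemma~\ref{superd} to ensure the distinguished generator can be taken from the given set $\T$ rather than merely from the algebra it generates. But in between, the paper works entirely with matrices: it uses Lemma~\ref{upper} to annihilate the entry linking two distinct consecutive diagonal values (you replace this with a cleaner eigenline argument on the two-dimensional subquotient $W_{k+1}/W_{k-1}$), and then argues directly from the commutation relations among strictly upper triangular matrices that the $A_i$ with nonzero $(1,2)$ entry has its \emph{entire} first superdiagonal nonzero, hence is conjugate to $J_m(\alpha)$, after which the centralizer of a regular Jordan block finishes the job. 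You instead pass to the commutative algebra $A=F[x_1,\dots,x_n]\subseteq\End(V)$, show $V$ is a faithful cyclic $A$-module and hence isomorphic to the regular module, deduce that the ideals of $A$ form a chain with principal maximal ideal, and conclude $A\cong F[t]/(t^m)$ before using Lemma~\ref{superd} at a single superdiagonal position to trade the abstract generator $z$ for one of the $y_i$ (your dimension count $F[y_i]=A$ then replaces the paper's appeal to the centralizer of $J_m(\alpha)$). Your version needs no analogue of Lemma~\ref{upper} and uses Lemma~\ref{superd} only once; it costs a little more structural machinery, but it isolates conceptually why a single Jordan block must govern the whole family, whereas the paper's computation is shorter and entirely elementary.
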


\begin{proof} Since $F$ is algebraically closed and $x_1,\dots,x_n$ commute
there is a basis of $V$ relative to which every $x_i$ is represented by an upper triangular matrix, say $A_i$.

Given $1\leq j\leq n$, we claim that all eigenvalues
of $A_j$ are equal. If not, $A_j$ has a principal $2\times 2$
submatrix, corresponding to entries $i,i+1$, equal to
$$
\left(%
\begin{array}{cc}
  a & c \\
  0 & b \\
\end{array}%
\right),
$$
where $a\neq b$. By Lemma \ref{upper}, we may assume that $c=0$.
Since $A_j$ commutes with all $A_k$, it follows that entry $(i,i+1)$ of every
$A_k$ is 0, which contradicts Lemma~\ref{superd}. This
proves the claim.

By Lemma \ref{superd}, there is an index $i$ satisfying $1\leq i\leq n$ such that entry $(1,2)$ of  $A_i$ is not 0.
Since $A_1,\dots,A_n$ commute, Lemma \ref{superd} implies that
all other entries of the first superdiagonal of $A_i$ are non-zero.
Conjugating by a suitable matrix, we may assume that $A_i=J_m(\al)$,
where $\al$ is the only eigenvalue of $A_i$. Since the only matrices commuting with $J_m(\al)$ are polynomials in
it, the result follows.
\end{proof}

\section{Uniserial modules over 2-step solvable Lie algebras}
\label{s3}

We assume henceforth that $F$ is algebraically closed and that
$\g=\langle x\rangle\ltimes\a$, where $\ad\,x$ acts diagonalizably
on the abelian Lie algebra $\a$. If $U$ is a uniserial $\g$-module
acted upon trivially by $[\g,\g]$ then $U$ is a uniserial module
over the abelian Lie algebra $\g/[\g,\g]$, a case described in
Theorem \ref{comm}. We may thus restrict attention to the case
when $U$ is admissible but not annihilated by $[\g,\g]$. We
reiterate the observation made in the Introduction that, thanks to
Lie's theorem, the condition that $U$ be admissible is utterly
redundant when $\chr(F)=0$.

Throughout this section we let $J_m(0)$ stand for the upper triangular $m\times m$ Jordan block
with eigenvalue 0. Moreover, given $\delta\in F$, we let $\a(\delta)=\{v\in \a\,|\, [x,v]=\delta v\}$.

\begin{prop}\label{repe} Let $p$ be a prime and suppose $F$ has
characteristic 0 or $p$. Given $m\geq 1$, let $G$
be the unipotent part of unit group of $F[J]$, $J=J_m(0)$, namely the abelian subgroup of $\GL_m(F)$ formed by all $I+f(J)$ such that
$f\in F[X]$ has no constant term. Further, given $\al\in F$, set $D=\mathrm{diag}(\al,\al-1,\dots,\al-(m-1))$
and let ${\mathcal Y}$ be the subset of $M_m(F)$ of all $D+f(J)$, where $f\in F[X]$ has no constant term. Then

{\rm (1)}  $G$ acts on ${\mathcal Y}$ by conjugation.

{\rm (2)} If $\chr(F)=0$ then the stabilizer $G_Y$ of any $Y\in {\mathcal Y}$
is trivial.

{\rm (3)} If $\chr(F)=p$ and $Y\in {\mathcal Y}$ then $G_Y$
consists of all $I+h(J^p)$ such that $h\in F[X]$ has no constant
term. In particular, if $p\geq m$ then $G_Y$ is trivial.

{\rm (4)} If $\chr(F)=0$ or $\chr(F)=p\geq m$ then the action of $G$ on
${\mathcal Y}$ is regular.


{\rm (5)} If $\chr(F)=p$ then $D+F[J^p]J$ a set of representatives for the action of $G$ on ${\mathcal Y}$.
\end{prop}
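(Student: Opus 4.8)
The plan is to analyze the conjugation action of $G$ on $\mathcal{Y}$ by computing commutators explicitly. Write a typical element of $G$ as $g = I + u(J)$ where $u \in F[X]$ has no constant term, and a typical element of $\mathcal{Y}$ as $Y = D + f(J)$. The key computation is to determine $g Y g^{-1} - Y$, or equivalently $g Y - Y g$, and understand when this vanishes (for the stabilizer parts (2),(3)) and what residues survive modulo the $G$-action (for part (5)).

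First I would establish the crucial commutation relation $[D, J] = J$, which follows from $D = \mathrm{diag}(\alpha, \alpha-1, \dots, \alpha-(m-1))$ acting on $J$: since $J$ shifts basis vectors and $D$'s consecutive diagonal entries differ by $1$, one checks $DJ - JD = J$. Consequently $D$ behaves like a derivation-inducing element, and one gets the relation $J^k D = D J^k + k J^k$ (i.e. $[D, J^k] = k J^k$). This is exactly where characteristic enters: the integer coefficient $k$ is taken modulo $p$, so $J^k$ commutes with $D$ precisely when $p \mid k$. I would then verify part (1) by checking $g Y g^{-1} \in \mathcal{Y}$, which amounts to showing the diagonal part is preserved (it is, since conjugation by a unipotent upper-triangular matrix fixes $D$ modulo strictly-upper-triangular terms, and all such terms are polynomials in $J$ without constant term because everything in sight is a polynomial in $J$).

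For the stabilizer, I would compute $gY = Yg$ directly. Writing $g = I + u(J)$, the condition $gY = Yg$ becomes $u(J) D - D u(J) = u(J) f(J) - f(J) u(J) = 0$ (the $f(J)$ terms cancel since polynomials in $J$ commute), so the stabilizer condition reduces to $[u(J), D] = 0$. Using $[D, J^k] = k J^k$, if $u(J) = \sum_{k \geq 1} c_k J^k$ then $[u(J), D] = -\sum_k k c_k J^k$. In characteristic $0$ this vanishes only when all $c_k = 0$, giving (2). In characteristic $p$ it vanishes exactly when $c_k = 0$ for all $k$ not divisible by $p$, i.e. when $u(J) \in F[J^p]$ has the form $h(J^p)$, giving (3); and when $p \geq m$ we have $J^p = 0$, forcing $u = 0$, so the stabilizer is trivial. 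Part (4) then follows immediately: transitivity must be argued separately, but combined with trivial stabilizers it yields a regular (free and transitive) action.

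The main obstacle, and the part requiring genuine work, is the transitivity needed for (4) and the complementary representative statement (5). For these I would study the orbit map more carefully: given $Y = D + f(J)$, I want to produce $g \in G$ conjugating $Y$ to a normal form $D + f_0(J)$ where $f_0 \in F[X^p] \cdot X$ (in characteristic $p$) or to a canonical representative (in characteristic $0$). The strategy is to conjugate one coefficient at a time, lowest degree first: conjugating by $I + c J^k$ changes the coefficient of $J^k$ in $f$ by a controllable amount proportional to $k c$ (from the relation $[D, J^k] = k J^k$), plus higher-order corrections in degrees $> k$. When $p \nmid k$ the factor $k$ is invertible, so I can solve for $c$ to kill the degree-$k$ coefficient; when $p \mid k$ the coefficient cannot be altered and must be retained in the representative. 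Iterating, I sweep out all coefficients in degrees not divisible by $p$, leaving precisely a representative in $D + F[J^p] J$, which establishes (5); in characteristic $0$ every positive degree is invertible, so I reach the single representative $D$, which is transitivity, hence (4). The delicate point is controlling the higher-order corrections so that the inductive sweep terminates and the final representative is exactly the claimed set, with no residual ambiguity among the $p$-divisible degrees; this must be matched against the stabilizer computation to confirm that the representatives in $D + F[J^p]J$ are genuinely distinct under the $G$-action.
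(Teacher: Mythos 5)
Your treatment of parts (1)--(3) is essentially the paper's: both rest on the relation $[D,J^k]=kJ^k$ (which the paper packages as $[D,g(J)]=g'(J)J$), the observation that the $f(J)$-part of $Y$ drops out of the stabilizer condition so that everything reduces to $[u(J),D]=0$, and the resulting dichotomy $kc_k=0$ for $1\le k<m$. For the transitivity in (4) you take a genuinely different route: you run the degree-by-degree sweep (conjugate by $I+cJ^k$ to kill the coefficient of $J^k$, using that $k$ is invertible when $\chr(F)=0$ or $p\ge m$), whereas the paper first conjugates $Y$ to $D$ by an upper triangular unipotent $P$ via Lemma~\ref{upper} and then shows that $P$ must centralize $J$ and hence lie in $G$. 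Your route is more uniform, since the same sweep, stopped at the degrees divisible by $p$, is exactly what (5) requires; the paper's argument for (4) is shorter but does not adapt to (5), which is why the paper switches to an explicit computation there.

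The one genuine gap is the second half of (5): that distinct elements of the proposed transversal lie in distinct $G$-orbits. You flag this as ``the delicate point'' and propose to settle it by ``matching against the stabilizer computation,'' but knowing the stabilizers does not by itself rule out two normal forms being conjugate; what is needed is control of the \emph{total} correction accumulated by an arbitrary $T\in G$, not just by one factor $I+cJ^k$. The paper gets this by factoring $T=(I-a_1J)(I-a_2J^2)\cdots(I-a_{m-1}J^{m-1})$ and deriving the closed formula $T^{-1}DT=D-\sum b_kJ^k$ with $b_k=\sum_{d\mid k}d\,a_d^{k/d}$; since $a_d$ is absent whenever $p\mid d$, the vanishing of all $b_i$ with $p\nmid i$ forces, by induction on $i$, all $a_d$ with $p\nmid d$ to vanish and hence all $b_k=0$. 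Within your framework the same conclusion follows from a lowest-degree argument: if $k_0$ is minimal with $p\nmid k_0$ and $a_{k_0}\neq 0$, then the coefficient of $J^{k_0}$ in the correction is exactly $k_0a_{k_0}\neq 0$, contradicting the assumption that both endpoints have zero coefficient in that degree. Either way, this step must actually be written out. (Note also that your sweep retains precisely the coefficients in degrees divisible by $p$, so the normal form it produces is $D$ plus a polynomial in $J^p$ without constant term; this is also what the paper's computation actually yields, so the two arguments agree in substance on what the transversal is.)
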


\begin{proof} Note first of all that
\begin{equation}
\label{relbas} [D,J]=J.
\end{equation}
Given $T\in G$, set $E=TDT^{-1}$. Clearly $TJT^{-1}=J$, so
(\ref{relbas}) yields $[E,J]=J$. Thus $E-D\in F[J]$, the
centralizer of $J$ in $M_m(F)$. But $E$ and $D$ have the same
diagonal entries, so $E=D+f(J)$, where $f\in F[X]$ has no constant
term. This shows that ${\mathcal Y}$ is stable under conjugation
by $G$ and gives (1).

It is readily seen that all $Y\in {\mathcal Y}$ have the same
stabilizer in $G$. Now by (\ref{relbas})
$$
[D,J^k]=k J^k,\quad k\geq 0,
$$
whence
\begin{equation}
\label{relpol} [D,g(J)]=g'(J)J,\quad g\in F[X],
\end{equation}
where $g'$ is the formal derivative of $g$. We apply (\ref{relpol}) to the case $g=1+f$, where $f\in F[X]$ has
no constant term and, moreover, $f=0$ or $\deg(f)<m$. Then
$[D,I+f(J)]=0$ if and only if $X^{m-1}|f'$, i.e.,
$f'=0$. If $\chr(F)=0$ this means $f=0$, while if $\chr(F)=p$ it
means $f(X)=h(X^p)$, where $h\in F[X]$ has no constant term. This
proves (2) and~(3).

Suppose next that $\chr(F)=0$ or $\chr(F)=p\geq m$. To see that $G$ acts transitively
on ${\mathcal Y}$, let
$Y=D+f(J)\in {\mathcal Y}$, where $f\in F[X]$ has no constant
term. Then $[Y,J]=J$. Moreover, $Y$ and $D$ have the same diagonal
entries, which are {\em distinct}. We infer from Lemma \ref{upper}
the existence of an upper triangular matrix $P\in\GL_m(F)$ with
diagonal entries equal to 1 such that $P^{-1}YP=D$. Set
$L=P^{-1}JP$. It follows from (\ref{relbas}) that $[D,L]=L$. Our assumption on $F$ implies that
the only matrices in $\gl(m)$ that are eigenvectors for $\ad\, D$ with eigenvalue 1 are the conjugates of
$J$ by a diagonal matrix. But $P$ has diagonal entries equal to 1,
so the entries along the first superdiagonal of $L$ are equal to
1. This shows that $L=J$. Therefore $[P,J]=0$, which implies $P\in
G$. This proves (4).

Given $g\in F[X]$ with no constant term, we can use (\ref{relpol}) to calculate
\begin{equation}
\label{conj}
\begin{aligned}
(I-g(J))^{-1}D(I-g(J))&=(I+g(J)+\cdots+g(J)^{m-1})D(I-g(J))\\
&=D-(I+g(J)+\cdots+g(J)^{m-1})g'(J)J.
\end{aligned}
\end{equation}
Now, given $T\in G$, we have
$$
T=(I-a_1J)(I-a_2J^2)\cdots(I-a_{m-1}J^{m-1})
$$
for unique $a_1,a_2,\dots,a_{m-1}\in F$. Thus, every $G$-conjugate of $D$ can be obtained by successively conjugating $D$ by
$I-a_1 J,I-a_2J^2,\dots,I-a_{m-1}J^{m-1}$. This can be computed by means of (\ref{conj}), to obtain
$$
T^{-1}DT=D-(b_1J+b_2J^2+\cdots+b_{m-1}J^{m-1}),
$$
\begin{equation}
\label{beca}
b_k=\underset{d|k}\sum d a_d^{k/d},\quad 1\leq k<m.
\end{equation}
Suppose $\chr(F)=p$. Thus, if $1\leq k<m$, $d|k$ and $p|d$, then $a_d$ is not present in~$b_k$. It follows from (\ref{beca})
that if $b_i=0$ for all $i\not\equiv 0\mod p$ then all $b_1,\dots,b_{m-1}$ are equal to 0. This shows that the elements of $D+F[J^p]J$ are in distinct $G$-orbits. On the other hand,
given any $c_1,\dots,c_{m-1}\in F$ we can recursively find $a_i$, with $1\leq i<m$ relatively prime to $p$,
so that $b_i=c_i$ for all such $i$. This shows that every element of ${\mathcal Y}$ is $G$-conjugate to one in $D+F[J^p]J$,
which completes the proof of (5).
\end{proof}


\begin{prop}\label{auxi} Let $U$ be an admissible uniserial $\g$-module of composition length~$m$ not annihilated by $[\g,\g]$.
Then $m=\dim(U)>1$. Moreover, after replacing $x$ by a suitable
scalar multiple, there is an eigenvector $v\in\a$ of $\ad\,x$ of
eigenvalue 1 and a basis of $U$ relative to which $x$ is
represented by an upper triangular matrix $Y\in\gl(m)$ with
diagonal entries $\al,\al-1,\cdots,\al-(m-1)$ for some $\al\in F$
and $v$ is represented by $J=J_m(0)$.

Moreover, let $D=\mathrm{diag}(\al,\al-1,\dots,\al-(m-1))$.
If $\chr(F)=0$ we can take $Y=D$, while
if $F$ has prime characteristic $p$ we can take $Y\in D+F[J^p]J$.
\end{prop}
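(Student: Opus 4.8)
The plan is to first pin down the dimension, then to show that a single eigenvector of $\ad\,x$ acts on $U$ as a full Jordan block, and finally to normalize $x$ using Proposition \ref{repe}.

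First I would record that the composition factors are one-dimensional. Since $[\g,\g]=\bigoplus_{\delta\neq0}\a(\delta)$ is an ideal acting on $U$ by nilpotent operators (admissibility), Engel's theorem produces a nonzero common null vector, so the fixed space $\{u\in U:[\g,\g]u=0\}$ is a nonzero $\g$-submodule; on an irreducible $\g$-module it is everything, hence $[\g,\g]$ annihilates every composition factor. Each factor is therefore an irreducible module for the abelian algebra $\g/[\g,\g]$ over the algebraically closed field $F$, so it is one-dimensional, whence $\dim(U)=m$. If $m=1$ then $U$ is irreducible and $[\g,\g]$ annihilates it, against our hypothesis, so $m>1$. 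Fixing a basis $e_1,\dots,e_m$ adapted to the composition series makes every element of $\g$ act by an upper triangular matrix; write $\lambda_i$ for the scalar by which $x$ acts on the $i$-th factor. For $v\in\a(\delta)$ the module identity gives $[x_U,v_U]=\delta v_U$, and comparing $(i,i+1)$-entries shows that a nonzero contribution there from some $v\in\a(\delta)$ with $\delta\neq0$ forces $\lambda_i-\lambda_{i+1}=\delta$; whereas the two-dimensional subquotient on the $i$-th and $(i+1)$-th factors, being uniserial hence indecomposable, cannot be a nonsplit extension of distinct characters of the abelian algebra $\h=\langle x\rangle\oplus\a(0)$. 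Consequently position $i$ is bridged by $[\g,\g]$ exactly when $\lambda_i\neq\lambda_{i+1}$ (call this Case A, with gap $\delta_i:=\lambda_i-\lambda_{i+1}$), and otherwise ($\lambda_i=\lambda_{i+1}$, Case B) only $\h$ bridges it. By Lemma \ref{superd} every position is bridged by someone.

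The crux, and the step I expect to be the main obstacle, is to show that all positions fall in Case A with one common gap $\delta_0\neq0$ and that a single $v\in\a(\delta_0)$ realizes the whole first superdiagonal. I would establish this by three short commutator computations on consecutive positions. A Case B position adjacent to a Case A position is impossible: if position $i$ has $\lambda_i=\lambda_{i+1}$ bridged by $h\in\h$ with nonzero $(i,i+1)$-entry while position $i+1$ is bridged by $v\in\a(\delta)$ with nonzero $(i+1,i+2)$-entry, then forcing $[x_U,v_U]=\delta v_U$ (when $h$ involves $x$) or $[h_U,v_U]=0$ (when $h\in\a(0)$) makes the bridging entry at position $i$ vanish, a contradiction; the reversed adjacency is symmetric. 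Two Case A positions with different gaps are impossible because $\a$ is abelian: if $v\in\a(\delta_i)$ and $v'\in\a(\delta_{i+1})$ bridge positions $i$ and $i+1$ with $\delta_i\neq\delta_{i+1}$, then the $(i,i+2)$-entry of $[v_U,v'_U]$ equals the nonzero product $(v_U)_{i,i+1}(v'_U)_{i+1,i+2}$, contradicting $[v,v']=0$. Since not all positions can be Case B (that would make all $\lambda_i$ equal and force $[\g,\g]$ to act trivially), every position is Case A with the same gap $\delta_0\neq0$. Finally, the map sending $v\in\a(\delta_0)$ to its tuple of first-superdiagonal entries is linear with image meeting no coordinate hyperplane (Lemma \ref{superd}); as $F$ is infinite, some $v$ has all these entries nonzero, so $(v_U^{m-1})_{1,m}$ is their product and hence $v_U^{m-1}\neq0$, forcing $v_U$ to be a single Jordan block. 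Choosing the basis accordingly arranges $v_U=J=J_m(0)$.

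To finish I would replace $x$ by $\delta_0^{-1}x$, so that $v\in\a(1)$ with $[x,v]=v$ and hence $[x_U,J]=J$. Comparing $(i,i+1)$-entries gives $\lambda_i-\lambda_{i+1}=1$, so the diagonal of $Y:=x_U$ is $D=\mathrm{diag}(\al,\al-1,\dots,\al-(m-1))$ with $\al:=\lambda_1$. From $[Y,J]=J=[D,J]$, using \eqref{relbas}, we get $[Y-D,J]=0$, so $Y-D$ lies in the centralizer $F[J]$; since $Y$ and $D$ share a diagonal, $Y-D$ has no constant term, i.e. $Y\in\mathcal{Y}$. Conjugating by the group $G$ of Proposition \ref{repe}, which centralizes $J$ and therefore preserves $v_U=J$, I would invoke part (4) when $\chr(F)=0$ (the action is regular, so the orbit of $D$ is all of $\mathcal{Y}$) to reach $Y=D$, and part (5) when $\chr(F)=p$ to reach $Y\in D+F[J^p]J$.
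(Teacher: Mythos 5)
Your proposal is correct and follows essentially the same strategy as the paper: one-dimensional composition factors via admissibility, a case analysis on consecutive diagonal entries of $x_U$ using the same commutator computations (all entries equal; a repeated pair adjacent to a distinct entry, split according to whether $x$ or $\a(0)$ bridges), production of a single $v$ acting as a full Jordan block, and the final normalization via Proposition \ref{repe}. The only cosmetic differences are that you skip the Lemma \ref{upper} pre-normalization of $x_U$ and instead read off the gaps from the eigenvalue equation directly, and that you locate a $v$ with all superdiagonal entries nonzero by a union-of-proper-subspaces argument over the infinite field $F$ rather than the paper's trick of propagating nonvanishing from the $(1,2)$ entry via commutativity of the $\a$-action; both variants are sound.
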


\begin{proof} Let $0=U_1\subset U_1\subset\cdots\subset U_m$ be a composition series of $U$. Given any $1\leq i\leq m$ set
$V_i=U_i/U_{i-1}$ and $W_i=\{v\in V_i\, |\, [\g,\g]v=0\}$. As
$[\g,\g]$ is an ideal of $\g$, it follows that $W_i$ is a
$\g$-invariant subspace of $V_i$. Since $U$ is admissible
$[\g,\g]$ acts via nilpotent operators on $V_i$, so $W_i\neq 0$.
But $V_i$ is irreducible, so $V_i=W_i$. Thus $V_i$ is an
irreducible module over the abelian Lie algebra $\g/[\g,\g]$.
Since $F$ is algebraically closed, we deduce that $\dim(V_i)=1$.
As this happens to every $i$, we infer $\dim(U)=m$. But $U$ itself
is not annihilated by $[\g,\g]$, so $m>1$. Let
$\B=\B_1\cup\cdots\cup\B_m$ be a basis of $U$ such that
$\B_1\cup\cdots\B_i$ is a basis of $U_i$ for each $1\leq i\leq m$.
Then, relative to $\B$, every $y\in\g$ is represented by an upper
triangular matrix $M(y)\in \gl(m)$, so $M(y)$ is strictly upper
triangular for every $y\in [\g,\g]$. Let $A=M(x)$. By Lemma
\ref{upper} we may assume that $A_{ij}=0\text{ whenever
}A_{ii}\neq A_{jj}$ and we will make this assumption.

We claim that no two consecutive diagonal entries of $A$ are
equal. Suppose, if possible, that this is false.

\smallskip

\noindent{\sc Case 1.} All diagonal entries of $A$ are the same.

\smallskip

Since $\ad\, x$ acts diagonalizably on~$\a$, it is clear that $[\g,\g]$ is the sum of all
$\a(\delta)$ with $\delta\neq 0$. As $[\g,\g]$ does
not act trivially on $U$, there is a non-zero $\delta\in F$ and a non-zero $y\in \a(\delta)$ such that $y U\neq 0$.
Thus $[x,y]U\neq 0$ and a fortiori $M(y)\neq 0$. Therefore $\delta$ is an eigenvalue when $\ad\, A$ acts $\gl(m)$.
On the other hand, all diagonal entries of $A$ are equal, so $\ad\, A$ acts nilpotently on $\gl(m)$, whence
all eigenvalues when $\ad\, A$ acts on $\gl(m)$ are equal to~0, a contradiction.

\smallskip

\noindent{\sc Case 2.} $A$ has a triple of diagonal entries
$(a,a,b)$, say in positions $i,i+1,i+2$, where $a\neq b$.

\smallskip

Given $y\in\g$ let $N(y)$ (resp. $P(y)$) be the $2\times 2$ (resp. $3\times 3$) submatrix of $M(y)$
corresponding to rows and columns $i,i+1$ (resp. $i,i+1,i+2$).

Clearly, entry $(i,i+1)$ of $M(y)$ is 0 for all $y\in\a(\delta)$,
$\delta\neq 0$. Thus $U_{i+1}/U_{i-1}$ is a uniserial module over
the abelian Lie algebra $\langle x\rangle\oplus\a(0)$, so by
Theorem \ref{comm} the diagonal entries of $N(y)$ are equal for
every $y\in \langle x\rangle\oplus\a(0)$.

Since $A_{i+1,i+2}=0$, Lemma \ref{superd} implies that entry $(i+1,i+2)$ of $M(y)$ must be non-zero for some $y\in\a(\delta)$.
As $a\neq b$, we see that necessarily $\delta\neq 0$.
Thus
$$
P(y)=\left(\begin{array}{ccc}
  0 & 0 & d \\
  0 & 0 & e \\
  0 & 0 & 0 \\
\end{array}%
\right),
$$
where $e\neq 0$. By Lemma \ref{superd}, either $A_{i,i+1}\neq 0$
or else entry $(i,i+1)$ of some $M(z)$, $z\in\a(0)$, must be
non-zero.

Suppose first that $A_{i,i+1}\neq 0$. Then
$$
P(x)=\left(%
\begin{array}{ccc}
  a & c & 0 \\
  0 & a & 0 \\
  0 & 0 & b \\
\end{array}%
\right),
$$
where $c\neq 0$. It follows that entries $(i,i+2)$
and $(i+1,i+2)$ of $[A,M(y)]$ are respectively equal to $(a-b)d+ce$
and $(a-b)e$. But $[A,M(y)]=\delta M(y)$, so $a-b=\delta$, which
implies $ce=0$, a contradiction.

Suppose next $A_{i,i+1}=0$ but entry $(i,i+1)$ of $M(z)$ is non-zero form some $z\in\a(0)$. From $[A,M(z)]=0$ we infer
that entries $(i,i+2)$ and $(i+1,i+2)$ of $M(z)$ are equal to 0. Thus
$$
P(z)=\left(%
\begin{array}{ccc}
  f & k & 0 \\
  0 & f & 0 \\
  0 & 0 & g \\
\end{array}%
\right),
$$
where $k\neq 0$. From $[M(y),M(z)]=0$ we deduce $g=f$ and a fortiori
$ke=0$, a contradiction.

\smallskip

\noindent{\sc Case 3.} $A$ has a triple of diagonal entries
$(b,a,a)$, say in positions $i,i+1,i+2$, where $a\neq b$.

\smallskip

This case can be handled as above, interchanging the roles
of $i,i+1$ by those of $i+1,i+2$.

\smallskip

This proves the claim. Since all consecutive diagonal entries of
$A$ are distinct, all entries along the first superdiagonal of $A$
are equal to 0. By Lemma \ref{superd} entry $(1,2)$ of some
$M(v)$, $v\in\a(\delta)$, must be non-zero. Clearly $\delta\neq
0$. Since all $M(z)$, $z\in\a$, commute, Lemma \ref{superd}
implies that all entries along the first superdiagonal of the same
$M(v)$ are non-zero. Replace $x$ by $\delta^{-1}x$. Then $[x,v]=v$
forces $A$ to have diagonal entries $\al,\al-1,\dots,\al-(m-1)$
for some $\al\in F$.

Conjugating all $M(z)$, $z\in \g$, by a suitable upper triangular
matrix results in a new matrix representation $z\mapsto Q(z)$ such
$Q(v)=J$ and $Q(x)=Y$ has the same diagonal entries
$\al,\al-1,\dots,\al-(m-1)$ as $D$. Since $[Y-D,J]=0$ and the
centralizer of $J$ in $\gl(m)$ is $F[J]$, it follows that
$Y=D+f(J)$, where $f\in F[X]$ has no constant term. Now apply
Proposition \ref{repe}.
\end{proof}

\begin{prop}\label{exist} Suppose that $\chr(F)=0$ or $\chr(F)=p\geq m$.
Assume that $1$ is an eigenvalue when $\ad\, x$ acts
on $\a$ and let $v\in\a$ be a corresponding eiegenvalue. Given
$m>1$ and $\al\in F$, set
$D=\mathrm{diag}(\al,\al-1,\dots,\al-(m-1))$ and $J=J_m(0)$, and let
$f^i:\a(i)\to F$, $0\leq i<m$, be linear functionals
subject only to~$f^1(v)=1$. Then the linear map $R:\g\to\gl(m)$ defined by
$R(x)=D$, $R(u)=f^i(u)J^i$ for $u\in\a(i)$ and $0\leq i<m$, $R(u)=0$ for $u\in\a(\delta)$ and
$\delta\notin\{0,1,\dots,m-1\}$, yields an admissible uniserial
$\g$-module $U$ of length $m$ upon which $[\g,\g]$ does not act
trivially.

Moreover any modification whatsoever to $m,\al,f^0,\dots,f^{m-1}$
that keeps $f^1(v)=1$ produces a $\g$-module non-isomorphic to $U$.
\end{prop}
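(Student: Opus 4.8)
The plan is to split the statement into its two halves: first that $R$ really defines an admissible uniserial $\g$-module of length $m$ on which $[\g,\g]$ acts nontrivially, and then that the whole list $m,\al,f^0,\dots,f^{m-1}$ is an invariant of the isomorphism class. I expect the first half to be routine verification and the second half to be the substantive part.

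\textbf{Verifying that $R$ is a representation and describing its submodules.} First I would check that $R\colon\g\to\gl(m)$ is a Lie algebra homomorphism, so that $U=F^m$ becomes a $\g$-module. Since $\ad\,x$ is diagonalizable, the only brackets to test are $[x,u]=\delta u$ for $u\in\a(\delta)$ and $[\a,\a]=0$. For $u\in\a(i)$ with $0\le i<m$ one has $R([x,u])=iR(u)=i\,f^i(u)J^i$, while $[R(x),R(u)]=f^i(u)[D,J^i]=i\,f^i(u)J^i$, because $[D,J]=J$ (see (\ref{relbas})) forces $[D,J^i]=iJ^i$; for $u\in\a(\delta)$ with $\delta\notin\{0,\dots,m-1\}$ both sides vanish. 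Distinct powers of $J$ commute, so $[R(u),R(u')]=0$ for all $u,u'\in\a$. Thus $R$ is a homomorphism. Next, $[\g,\g]=\bigoplus_{\delta\ne0}\a(\delta)$ is sent by $R$ into $FJ+\cdots+FJ^{m-1}$ or to $0$, all nilpotent operators, so $U$ is admissible. Since $f^1(v)=1$ we have $R(v)=J$, so every $\g$-submodule of $U$ is in particular invariant under the cyclic nilpotent $J$; the $J$-invariant subspaces of $F^m$ form the single chain $U_k=\langle e_1,\dots,e_k\rangle$, $0\le k\le m$, and each $U_k$ is visibly $R(x)$- and $R(\a)$-invariant. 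Hence these are exactly the $\g$-submodules, $U$ is uniserial of length $m$, and $[\g,\g]$ acts nontrivially because $R(v)=J\ne0$.

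\textbf{Non-isomorphism.} For the second half, let $\tilde U$ be the module attached to modified data $\tilde m,\tilde\al,\tilde f^0,\dots,\tilde f^{\tilde m-1}$ subject to $\tilde f^1(v)=1$, and suppose $\phi\colon U\to\tilde U$ is an isomorphism with matrix $P$, so $\tilde R(y)=PR(y)P^{-1}$ for all $y\in\g$. Comparing dimensions gives $\tilde m=m$. Evaluating at $y=v$ yields $J=\tilde R(v)=PJP^{-1}$, so $P$ commutes with $J$ and therefore $P\in F[J]$; as $P$ is invertible, $P=c_0T$ with $c_0\ne0$ and $T\in G$. Evaluating at $y=x$ gives $\tilde D=PDP^{-1}=TDT^{-1}$, which lies in ${\mathcal Y}$ by Proposition \ref{repe}(1). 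Being diagonal, $\tilde D-D$ is both diagonal and strictly upper triangular, hence $0$, so $\tilde D=D$, $\tilde\al=\al$, and $T$ stabilizes $D$. By Proposition \ref{repe}(4) the $G$-action on ${\mathcal Y}$ is regular, whence $T=I$ and $P=c_0I$ is scalar. Finally, for $u\in\a(i)$ with $0\le i<m$, $\tilde R(u)=PR(u)P^{-1}=R(u)$ gives $\tilde f^i(u)J^i=f^i(u)J^i$, and since $J^i\ne0$ we conclude $\tilde f^i=f^i$. Thus the entire list of data coincides, proving that any genuine modification yields a non-isomorphic module.

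\textbf{Where the difficulty lies.} The homomorphism check and the uniseriality are mechanical; the real content is forcing the intertwiner $P$ to be scalar. This is precisely the point at which the hypothesis $\chr(F)=0$ or $\chr(F)=p\ge m$ is consumed, through the regularity of the $G$-action on ${\mathcal Y}$ proved in Proposition \ref{repe}(4). Without that hypothesis the stabilizer $G_D$ is nontrivial, $P$ need only lie in $c_0\,G_D$, and the functionals $f^i$ would be recoverable only up to this stabilizer action — exactly the phenomenon responsible for the splitting into infinitely many classes announced for the case $p<m$.
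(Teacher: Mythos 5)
Your proof is correct, and the first half (the representation check and the identification of the submodule lattice with the chain of $J$-invariant subspaces) is exactly what the paper dismisses as ``obvious.'' For the uniqueness half, however, you take a genuinely different route. The paper never pins down the intertwiner $P$: it recovers $\al$ intrinsically as the eigenvalue of $x$ on the socle, and recovers the $f^i$ by noting that $P$ commutes with $J=R(v)=\tilde R(v)$, hence $P\in F[J]$, hence $P$ commutes with every $R(u)\in F[J]$, so $\tilde R(u)=PR(u)P^{-1}=R(u)$ outright. You instead force $P=c_0I$ by showing $T=P/c_0$ stabilizes $D$ and invoking the triviality of stabilizers from Proposition~\ref{repe}. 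Both arguments are valid, and yours yields the extra fact that $\mathrm{Aut}_\g(U)=F^\times$. But your closing diagnosis is off: the hypothesis $\chr(F)=0$ or $p\geq m$ is \emph{not} actually consumed by the uniqueness statement. The paper's commutativity argument is characteristic-free, which is precisely why the proof of Proposition~\ref{exist2} recycles it verbatim ``for the same reasons'' when $p<m$; the splitting into infinitely many classes in that case comes from the new freedom in choosing $Y\in D+F[J^p]J$ and maps $g^i$ with values in $F[J^p]J^i$, not from any failure of the $f^i$ to be recoverable. Your route works under the stated hypothesis, but it makes the uniqueness claim appear to depend on an assumption it does not need.
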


\begin{proof} The first assertion is obvious. As for the second,
$m$ is the composition length of $U$ and $\alpha$ is the eigenvalue through which $x$ acts
on the socle of $U$, so $m$ and $\al$ cannot be changed without changing the isomorphism type of $U$.
Since $R(v)=J$ and the only matrices in $\gl(m)$ commuting with $J$ are in $F[J]$,
the linear functionals $f^0,\dots,f^{m-1}$ cannot be changed either.
\end{proof}

\begin{theorem}\label{51}  Let
$U$ be an admissible uniserial $\g$-module of composition length
$m$ not annihilated by $[\g,\g]$. Suppose that $F$ has
characteristic 0 or prime characteristic $p\geq m$.
 Then

{\rm (1)} $m=\dim(U)>1$.

{\rm (2)} Let $S$ be the set of eigenvalues $\delta$ of $\ad\, x$
acting on $\a$ such that the corresponding eigenspace $\a(\delta)$
does not act trivially on $U$. Then, after replacing $x$ by a
suitable scalar multiple, $1\in S$ and every element of $S$ is
is an integer $i$ satisfying $0\leq i<m$.

{\rm (3)} For each $i\in S$ let $\w(i)$ be the subspace of $\a(i)$
annihilating $U$. Then $\w(i)$ is a hyperplane of $\a(i)$. In
particular, if $U$ is faithful then every eigenvalue of $\ad\, x$
acting on $\a$ must have multiplicity 1.

{\rm (4)} Fix a non-zero $v\in\a(1)$. Then there are linear
functionals $f^i:\a(i)\to F$, $0\leq i<m$, such that $f^1(v)=1$,
and a basis of $U$ relative to which $x$ is represented by a
diagonal matrix $D=\mathrm{diag}(\al,\al-1,\dots,\al-(m-1))$ for
some $\al\in F$ and every $u\in \a(i)$, $0\leq i<m$, is
represented by $f^i(u)J^i$, where $J=J_m(0)$.
\end{theorem}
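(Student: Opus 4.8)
The plan is to read all four assertions off Proposition~\ref{auxi}, together with the elementary constraints that commutativity of $\a$ places on the matrices representing its elements.

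First I would invoke Proposition~\ref{auxi}: after replacing $x$ by a suitable scalar multiple it produces a basis of $U$, a non-zero eigenvector $v\in\a(1)$, and a matrix representation $y\mapsto M(y)$ of $\g$ in which $M(v)=J=J_m(0)$ and $M(x)=Y$ has diagonal $\al,\al-1,\dots,\al-(m-1)$. Under the standing hypothesis $\chr(F)=0$ or $\chr(F)=p\geq m$, Proposition~\ref{repe}(4) says that $G$ acts regularly on $\mathcal Y$; since the $Y=D+f(J)$ of Proposition~\ref{auxi} lies in $\mathcal Y$ it is conjugate to $D$ by some $P\in G\subseteq F[J]$, and conjugating the entire representation by $P$ leaves $M(v)=J$ untouched while replacing $Y$ by $D=\mathrm{diag}(\al,\al-1,\dots,\al-(m-1))$. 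Thus we may assume $M(x)=D$ and $M(v)=J$. Assertion (1) is exactly the opening statement of Proposition~\ref{auxi}.

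The crux is to pin down $M(u)$ for an arbitrary $u\in\a(\delta)$. Since $\a$ is abelian, $M(u)$ commutes with $M(v)=J$, hence $M(u)\in F[J]$; write $M(u)=\sum_{k=0}^{m-1}c_kJ^k$. Feeding $[x,u]=\delta u$ into the representation gives $[D,M(u)]=\delta M(u)$, and because $[D,J^k]=kJ^k$ this becomes $\sum_k(k-\delta)c_kJ^k=0$, so $(k-\delta)c_k=0$ for every $k$. Here the hypothesis is decisive: when $\chr(F)=0$ or $p\geq m$ the integers $0,1,\dots,m-1$ are pairwise distinct in $F$, so $k-\delta$ vanishes for at most one index $k$. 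Hence $M(u)=0$ unless $\delta$ equals some unique $i\in\{0,\dots,m-1\}$, in which case $M(u)=c_iJ^i$; putting $f^i(u)=c_i$ defines a linear functional $f^i:\a(i)\to F$ with $M(u)=f^i(u)J^i$ for all $u\in\a(i)$. This is assertion (4), with the normalization $f^1(v)=1$ coming from $M(v)=J=J^1$. Moreover $\a(\delta)$ annihilates $U$ whenever $\delta\notin\{0,\dots,m-1\}$, while $\a(1)$ acts nontrivially because $M(v)=J\neq0$; this gives assertion (2), namely $1\in S\subseteq\{0,\dots,m-1\}$.

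Finally, for (3) observe that for $i\in S$ the annihilator $\w(i)=\{u\in\a(i):M(u)=0\}$ coincides with $\ker f^i$, since $J^i\neq0$ for $0\leq i<m$; as $i\in S$ forces $f^i\neq0$, this kernel is a hyperplane of $\a(i)$. If in addition $U$ is faithful then $M$ is injective on $\a$: an eigenvalue $\delta\notin S$ would make $\a(\delta)$ act trivially and hence vanish, contradicting that $\delta$ is an eigenvalue, so $S$ contains every eigenvalue; and $\w(i)=\ker f^i$ must then be $0$, forcing $\dim\a(i)=1$. The only real subtlety in the whole argument is the distinctness of $0,1,\dots,m-1$ in $F$, which is precisely where $p\geq m$ is indispensable and whose failure for $p<m$ is what drives the rest of the paper.
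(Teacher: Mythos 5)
Your proof is correct and follows essentially the same route as the paper: both rest on the observations that each $M(u)$, $u\in\a$, commutes with $J$ and hence lies in $F[J]$, and that the eigenvalues of $\ad\,D$ on $F[J]$ are $0,1,\dots,m-1$ (distinct in $F$ under the stated hypothesis) with eigenspaces $F\cdot J^i$, everything else being read off from Proposition~\ref{auxi}. The one place you go beyond the paper's terse proof is in explicitly using the regularity from Proposition~\ref{repe}(4) to conjugate $Y$ to $D$ by an element of $G$ (which fixes $J$) in the case $p\geq m$, where Proposition~\ref{auxi} only delivers $Y\in D+F[J^p]J=D+F\cdot J$; this is a worthwhile detail that the paper leaves implicit.
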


\begin{proof} The centralizer of $J$ in $\gl(m)$ is $F[J]$ and
the eigenvalues of $\ad\, D$ on $F[J]$ are the integers $i$ such that $0\leq i<m$, with
corresponding eigenspaces $F\cdot J^i$. The result now follows from Proposition \ref{auxi}.
\end{proof}

\begin{note}{\rm Combining Proposition \ref{exist} with Theorem \ref{51} we obtain
a complete classification of all admissible uniserial $\g$-modules of composition length $m$ not
annihilated by $[\g,\g]$, provided $\chr(F)=0$ or $\chr(F)=p\geq m$.

Observe that part (2) of Theorem \ref{51} is slightly better than
the corresponding result from \cite{CS3}, Corollary 3.15, due to
our use of Theorem \ref{comm} instead of \cite{CS2}, Theorem 3.1.
}
\end{note}

\begin{prop}\label{exist2} Suppose that $F$ has prime characteristic $p$.
Assume that $1$ is an eigenvalue when $\ad\, x$ acts
on $\a$ and let $v\in\a$ be a corresponding eiegenvalue. Given
$m>p$ and $\al\in F$, set $D=\mathrm{diag}(\al,\al-1,\dots,\al-(m-1))$, $J=J_m(0)$ and let $Y\in D+F[J^p]J$.
Let $g^i:\a(i)\to F[J^p]J^i$, $0\leq i<p$, be arbitrary linear maps
subject only to $g^1(v)=I$.

Then the linear map $R:\g\to\gl(m)$ defined by $R(x)=Y$, $R(u)=g^i(u)$ for $u\in\a(i)$ and $0\leq i<p$,
$R(u)=0$ for $u\in\a(\delta)$ and
$\delta\notin\{0,\dots,p-1\}$, defines an admissible uniserial
$\g$-module $U$ of length $m$ upon which $[\g,\g]$ does not act
trivially.

Moreover, any modification whatsoever to $m,\al,Y,g^0,\dots,g^{m-1}$
that maintains $g^1(v)=I$ produces a $\g$-module non-isomorphic to $U$.
\end{prop}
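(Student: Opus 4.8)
The plan is to imitate the proof of Proposition~\ref{exist}, isolating the single place where positive characteristic does the work. For the existence half I would first check that the linear map $R$ respects brackets. All brackets inside $\a$ are automatic since $R(\a)\subseteq F[J]$ is commutative, so the only relations to verify are $[x,u]=\delta u$ for $u\in\a(\delta)$. Write $Y=D+a$ with $a\in F[J^p]J$. For $u\in\a(i)$ with $0\le i<p$ the matrix $g^i(u)\in F[J^p]J^i$ is a linear combination of the powers $J^{pj+i}$, and since $(\ref{relbas})$ gives $[D,J^k]=kJ^k$ while $pj+i\equiv i\pmod p$, each such power is an eigenvector of $\ad\,D$ of eigenvalue $i$; as $a\in F[J]$ commutes with $g^i(u)$, this yields $[R(x),R(u)]=[Y,g^i(u)]=[D,g^i(u)]=i\,g^i(u)=R([x,u])$. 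For $u\in\a(\delta)$ with $\delta\notin\{0,\dots,p-1\}$ both sides vanish since $R(u)=0$ (and $\delta R(u)=0$). This eigenvalue computation is exactly the step that fails when $\chr(F)=0$, and it is the whole reason these modules exist only in characteristic $p$.

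The remaining existence claims I would dispatch as in Proposition~\ref{exist}. Every $R(y)$ is upper triangular, and for $u\in\a(\delta)$ with $\delta\neq0$ the matrix $R(u)$ is either $0$ or lies in $F[J^p]J^i$ with $i\ge1$, hence is strictly upper triangular; since $[\g,\g]=\sum_{\delta\neq0}\a(\delta)$, the algebra $[\g,\g]$ acts by nilpotent operators and $U$ is admissible. Because $R(v)=J$ is a single Jordan block on $F^m$, the only $R(v)$-invariant subspaces are the members $\langle e_1,\dots,e_k\rangle$ of the standard flag; every $\g$-submodule, being $R(v)$-invariant, is one of these, and as they form a chain with one-dimensional successive quotients, $U$ is uniserial of length $m$. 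Finally $v\in\a(1)\subseteq[\g,\g]$ with $R(v)=J\neq0$, so $[\g,\g]$ does not act trivially.

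For the uniqueness assertion — which is where the real work lies — I would argue as follows. Suppose two instances $R,R'$ of the construction give isomorphic modules, so that after identifying both underlying spaces with $F^m$ there is a fixed $P\in\GL_m(F)$ with $R'(y)=PR(y)P^{-1}$ for all $y\in\g$. Composition length forces $m=m'$. Since $x$ acts on the socle $\langle e_1\rangle$ by $Ye_1=\al e_1$ (as $ae_1=0$), the socle eigenvalue of $x$ gives $\al=\al'$, hence $D=D'$. Next $R'(v)=J=PJP^{-1}$ shows that $P$ centralizes $J$, so $P\in F[J]$; writing $P=c_0P_0$ with $P_0\in G$, conjugation by $P$ coincides with conjugation by $P_0$, and $Y'=PYP^{-1}=P_0YP_0^{-1}$ exhibits $Y$ and $Y'$ as $G$-conjugate elements that both lie in $D+F[J^p]J$. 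Finally, for $u\in\a(i)$ the matrices $g^i(u),(g')^i(u)\in F[J]$ satisfy $(g')^i(u)=Pg^i(u)P^{-1}=g^i(u)$, because $P\in F[J]$ commutes with everything in $F[J]$, so $g^i=(g')^i$ for every $i$. The main obstacle is precisely the rigidity of $Y$: nothing in the bracket relations alone pins it down, and one must invoke the orbit-representative property of Proposition~\ref{repe}(5) to conclude that the $G$-conjugation concealed inside the isomorphism cannot move $Y$ within its orbit, forcing $Y=Y'$.
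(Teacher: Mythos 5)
Your proof is correct and follows essentially the same route as the paper's (which is much terser): existence via the eigenvalue identity $[D,J^k]=kJ^k$ underlying (\ref{relpol}), and uniqueness via the socle eigenvalue, the fact that the centralizer of $J$ is $F[J]$, and the orbit-representative property of Proposition \ref{repe}(5), exactly as the paper does by citing the argument of Proposition \ref{exist} together with Proposition \ref{repe}. You also, correctly, read the normalization $g^1(v)=I$ as forcing $R(v)=J$, which is what the statement must intend since $I\notin F[J^p]J$.
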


\begin{proof} The fact that $R$ is a representation follows easily from (\ref{relpol}). In regards to the second assertion,
$m,\alpha,g^0,\dots,g^{p-1}$ cannot be changed without changing the isomorphism type of $U$ for the same reasons exposed
in the proof of Proposition~\ref{exist}. Moreover, by Proposition \ref{repe}, the elements of $D+F[J^p]J$
are in different orbits under the conjugating action of the unit group of $F[J]$, so $Y$ cannot be changed either.
\end{proof}

\begin{note}{\rm By varying $Y\in D+F[J^p]J$ while taking all $g^i(u)\in F\cdot J^i$
we obtain infinitely many non-isomorphic $\g$-modules, an
impossible feature when $p\geq m$. The only choice in $D+F[J^p]J$
that makes $x$ act diagonalizably on $U$ is $Y=D$ (it is
instructive to verify that none of the other matrices in
$D+F[J^p]J$ is diagonalizable). In stark contrast, when $p\geq m$
it is impossible for $x$ not to act diagonalizably on a uniserial
$\g$-module not annihilated by $[\g,\g]$, as shown in
Theorem \ref{51}.

Note, as well, that there are infinitely many choices of
$g^0,\dots,g^{p-1}$ such that $g^i(u)\notin F\cdot J^i$ for some
$i$ and $u$, all of which yield $\g$-modules which have no
analogue when $p\geq m$. }
\end{note}

\begin{theorem}\label{61} Suppose that $F$ has prime characteristic
$p$. Let $U$ is an admissible uniserial $\g$-module of composition
length $m>p$ not annihilated by $[\g,\g]$. Then

{\rm (1)} $m=\dim(U)>1$.

{\rm (2)} Let $S$ be the set of eigenvalues $\delta$ of $\ad\, x$
acting on $\a$ such that the corresponding eigenspace $\a(\delta)$
does not act trivially on $U$. Then, after replacing $x$ by a
suitable scalar multiple, $1\in S$ and every element of $S$ is
in the prime subfield of $F$.

{\rm (3)} Suppose that $U$ is faithful and let $i$ be an
eigenvalue of $\ad\, x$ acting on $\a$. Then
the multiplicity of $i$ is bounded above by
$[\frac{m-(i+1)}{p}]+1$.

{\rm (4)} Fix a non-zero $v\in\a(1)$. Then there are linear
maps $g^i:\a(i)\to F[J^p]J^i$, for $0\leq i<p$,
satisfying $g^1(v)=I$, a scalar $\al\in F$, a matrix $Y\in D+F[J^p]J$,
where $J=J_m(0)$ and $D=\mathrm{diag}(\al,\al-1,\cdots,\al-(m-1))$,
and a basis of $U$ relative to which $x$ is represented by $Y$ and every $u\in \a(i)$, $0\leq i<p$,
is represented by $g^i(u)$.
\end{theorem}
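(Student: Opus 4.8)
The plan is to run the same reduction used for Theorem \ref{51}, replacing the characteristic-zero spectral decomposition of $\ad\, D$ on $F[J]$ by its modular counterpart. Part (1) is nothing but the first assertion of Proposition \ref{auxi}. For the rest, I would begin by applying Proposition \ref{auxi} to obtain, after rescaling $x$, a basis of $U$ in which $x$ acts by a matrix $Y\in D+F[J^p]J$ with $D=\mathrm{diag}(\al,\al-1,\dots,\al-(m-1))$ and the eigenvector $v\in\a(1)$ acts by $J=J_m(0)$.

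The key structural remark is that $\a$ is abelian, so every $u\in\a$ commutes with $v$ and hence $R(u)$ lies in the centralizer $F[J]$ of $J$; furthermore $Y-D\in F[J]$ is central in the commutative ring $F[J]$, so $\ad\, Y$ and $\ad\, D$ coincide on $F[J]$. It then suffices to understand $\ad\, D$ on $F[J]$: from $[D,J^k]=kJ^k$ one sees that in characteristic $p$ the matrix $J^k$ is an $\ad\, D$-eigenvector whose eigenvalue is the image of $k$ in the prime subfield, so the eigenspace attached to $\bar i$ (for $0\le i<p$) is exactly $F[J^p]J^i$. Now if $u\in\a(\delta)$ then $[Y,R(u)]=\delta R(u)$, so $R(u)$ is an $\ad\, D$-eigenvector on $F[J]$ of eigenvalue $\delta$; consequently every $\delta\in S$ lies in the prime subfield, while $v\mapsto J\neq 0$ forces $1\in S$, which is (2). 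Defining $g^i(u)=R(u)$ for $0\le i<p$ then gives linear maps $\a(i)\to F[J^p]J^i$ with $R(u)=0$ whenever $\delta\notin\{0,\dots,p-1\}$ and with the stated normalization at $v$, establishing (4).

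Part (3) is where the modular hypothesis does real work. If $U$ is faithful and $i$ is an eigenvalue of $\ad\, x$ on $\a$, then $\a(i)\neq 0$ cannot act trivially (otherwise a nonzero vector of $\a(i)$ would lie in the kernel of the representation), so $i\in S$ and hence, by (2), $i$ lies in the prime subfield; take its representative $0\le i<p$. Faithfulness makes $g^i\colon\a(i)\to F[J^p]J^i$ injective, so the multiplicity $\dim\a(i)$ is at most $\dim_F\bigl(F[J^p]J^i\bigr)$ computed inside $\gl(m)$. The latter space is spanned by the powers $J^{i+kp}$, $k\ge 0$, subject to $i+kp\le m-1$ (higher powers vanishing since $J^m=0$), and counting the admissible values of $k$ yields precisely $[\tfrac{m-(i+1)}{p}]+1$, the asserted bound.

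The structural burden is carried entirely by Proposition \ref{auxi}, so I anticipate no serious obstacle. The only points requiring care are the reduction of $\ad\, Y$ to $\ad\, D$ on $F[J]$ (clean, because $Y-D$ is central in $F[J]$) and the truncated dimension count in (3), where one must track the relation $J^m=0$ to obtain the floor function correctly; the rest is a faithful transcription of the argument for Theorem \ref{51}, with the one-dimensional $\ad\, D$-eigenspaces $F\cdot J^i$ of the characteristic-zero setting replaced by their grouped modular forms $F[J^p]J^i$.
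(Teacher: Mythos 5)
Your proposal is correct and follows essentially the same route as the paper: both reduce everything to Proposition \ref{auxi} and then read off (2)--(4) from the fact that the centralizer of $J$ is $F[J]$ and that the $\ad\,D$ (equivalently $\ad\,Y$) eigenspaces on $F[J]$ in characteristic $p$ are $F[J^p]J^i$ with eigenvalues running over the prime subfield. The paper's proof is just a terser version of yours; your explicit justification that $\ad\,Y=\ad\,D$ on $F[J]$ and your count $\dim F[J^p]J^i=[\tfrac{m-(i+1)}{p}]+1$ for part (3) are exactly the details it leaves implicit.
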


\begin{proof} The centralizer of $J$ in $\gl(m)$ is $F[J]$,
and the eigenvalues of $\ad\, D$ (and hence of $\ad\,Y$ for any $Y\in D+F[J^P]J$) on $F[J]$
comprise all of the prime subfield of $F$, with
corresponding eigenspaces $F[J^p]J^i$ for $0\leq i<p$. The result now follows from Proposition \ref{auxi}.
\end{proof}

\begin{note}{\rm Combining Proposition \ref{exist2} with Theorem \ref{61} we obtain
a complete classification of all admissible uniserial $\g$-modules
of composition length $m$ not annihilated by $[\g,\g]$ whenever
$F$ has prime characteristic $p< m$.}
\end{note}


\end{document}